\theoremstyle{plain}
\newtheorem{thm}{Theorem}
\newtheorem{lem}[thm]{Lemma}
\theoremstyle{definition}
\theoremstyle{remark}
\newtheorem{rem}[thm]{Remark}
\theoremstyle{plain}
\newcommand{\Id}{{{\mathchoice {\rm 1\mskip-4mu l} {\rm 1\mskip-4mu l}
      {\rm 1\mskip-4.5mu l} {\rm 1\mskip-5mu l}}}}
\renewcommand\ker{\operatorname{ker}}
\def\ZZ{\mathbb{Z}}
\def\RR{\mathbb{R}}
\def\dd{\text{d}}
\def\Ham{Ham}
\def\Cal{Cal}
\begin{document}

\bibliographystyle{alphanum}

\title{Hofer's norm and disk translations in an annulus}
\date{\today} 
\author{Michael Khanevsky}
\address{Michael Khanevsky, School of Mathematics, 
Institute for Advanced Study, Einstein Drive, Princeton, NJ 08540, USA}
\email{khanev@math.ias.edu}

\begin{abstract}
Let $\mathcal{D}$ be a non-displaceable disk in an annulus $\mathbb{A}$. Suppose that $\phi$ is a compactly supported 
Hamiltonian such that $\phi (\mathcal{D}) = \mathcal{D}$ with translation number $n$.
We show that Hofer's norm $\| \phi \|$ is bounded from below by
$\kappa \cdot |n|$ for a certain constant $\kappa$ which depends on $\mathcal{D}$ but not on $n$. 
We also give example of such Hamiltonian which is more efficient than the obvious
rotation of $\mathcal{D}$. This answers question 5 from the list proposed 
by Fr\'{e}d\'{e}ric Le Roux in ~\cite{LR:6-questions}.
\end{abstract}

\maketitle

\section{Introduction and results}

Let $\mathbb{A} = S^1 \times (0, 1)$ be an annulus equipped with the 
standard symplectic form $\omega$. We use the convention $S^1 = \RR / \ZZ$, so
$\int_\mathbb{A} \omega = 1$. 
Let $\mathcal{D} \subset \mathbb{A}$ be a disk
with $A = Area (\mathcal{D}) = \int_\mathcal{D} \omega > 1/2$, $L = \partial \mathcal{D}$. 
Denote by $S = \{ \phi \in Ham (\mathbb{A}) \, | \, \phi (\mathcal{D}) = \mathcal{D}\}$ the stabilizer of $\mathcal{D}$ in 
the group $\Ham(\mathbb{A})$ of compactly supported Hamiltonian diffeomorphisms.

For a $\phi \in S$ we define the \emph{translation number} $\tau_\mathcal{D} (\phi)$ in the following way.
Roughly speaking, $\tau_\mathcal{D}$ is the number of loops around the $S^1$ coordinate that $\mathcal{D}$ does under an isotopy 
from the identity to $\phi$. For the formal definition, let $\widetilde{\phi}$ be the lift of $\phi$
to the universal cover $\widetilde{\mathbb{A}} \simeq \RR \times (0, 1)$ which restricts 
to $\Id$ near the boundary. Pick $p \in \mathcal{D}$ and 
pick a lift $\tilde{p}$ of $p$ to $\widetilde{\mathbb{A}}$.
Then $$\tau_\mathcal{D} (\phi) = \lim_{n \to \infty} \frac{\pi_\RR ({\widetilde{\phi}}^n (\tilde{p}))}{n}.$$
It is easy to see that $\tau_\mathcal{D}$ does not depend on the choices made. If $p \in \mathcal{D}$ is a fixed 
point of $\phi$ then $\tau_\mathcal{D} (\phi)$ agrees with the usual notion of translation number
for $\phi$ and $p$. We denote $S_n = \{\phi \in S \, | \, \tau_\mathcal{D} (\phi) = n\}$.

Denote by $\| \cdot \|$ the Hofer norm on the group $\Ham(\mathbb{A})$: 
\[
	\| \phi \| = \inf \int_0^1 \max_{p \in \mathbb{A}} H (p, t) - 
																								\min_{p \in \mathbb{A}} H (p, t) \mathrm{d}t ,
\]
where the infimum goes over all compactly supported Hamiltonians $H: \mathbb{A} \times [0, 1] \to \RR$ 
such that $\phi$ is the time-1 map of the corresponding flow.

We prove the following:
\begin{thm} \label{T:bound}
\[
	 \frac{2A-1}{2} \cdot |n| \leq \inf_{\phi \in S_n}{\|\phi\|} < (2 A - 1) \cdot |n| + 1.
\]
\end{thm}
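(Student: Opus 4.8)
The plan is to prove the two inequalities by entirely separate methods: the right-hand (upper) bound by an explicit, reasonably efficient construction of an element of $S_n$, and the left-hand (lower) bound by a symplectic rigidity argument carried out in the universal cover. I expect the lower bound to be the main obstacle, since the upper bound is, in principle, just an optimization over a family of isotopies that one writes down by hand. Throughout I would reduce to $n \geq 1$ (the case $n=0$ is trivial and $n<0$ follows by taking inverses, which preserves $\|\cdot\|$ and negates $\tau_\mathcal{D}$). The numerology to keep in mind is that $2A-1 = A - (1-A)$ is exactly the area of the ``stable overlap'' $\mathcal{D} \cap \psi(\mathcal{D})$ once $\psi$ has moved the complementary region (of area $1-A$) clear of itself; both bounds should ultimately trace back to this overlap area, with the factor $\tfrac12$ in the lower bound coming from using only one side of the oscillation $\max H - \min H$.

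\textbf{Upper bound.} For the construction I would model $\mathcal{D}$ as essentially the whole annulus with a thin radial slit removed (after a symplectic identification; any disk of area $A>1/2$ is Hamiltonian isotopic to such a standard model, and it suffices to treat the model since conjugation does not change $\|\cdot\|$). The inefficiency of the naive rigid rotation is that its generating Hamiltonian $H=r$ has oscillation $\approx 1$ per loop, because it must move \emph{all} radial levels. Instead I would transport only what must genuinely move: concentrate the flow so that over one loop the effective oscillation is $2A-1$ rather than $1$, exploiting that after the slit has been carried clear, the bulk of $\mathcal{D}$ can be returned to itself through the self-overlap of area $2A-1$ without further cost. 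Iterating the one-loop move $n$ times gives the leading term $(2A-1)\,|n|$, and a single compactly-supported tapering near $\partial\mathbb{A}$ together with a final adjustment bringing the last copy exactly onto $\mathcal{D}$ contributes the additive constant, yielding $\|\phi\| < (2A-1)|n| + 1$. This is where the ``more efficient than the obvious rotation'' claim of the abstract is realized; the verification is a direct computation of $\int_0^1(\max H_t - \min H_t)\,\dd t$ for the explicit $H$.

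\textbf{Lower bound (main obstacle).} Here I would pass to the universal cover $\widetilde{\mathbb{A}} \simeq \RR \times (0,1)$ and work with the lift $\widetilde{\phi}$ fixing the boundary, together with the $\ZZ$-periodic lifted Hamiltonian $\widetilde{H}_t$, noting that $\|\phi\|$ equals the cover version of the norm because periodicity makes $\operatorname{osc}\widetilde{H}_t = \operatorname{osc}H_t$. Since $\mathcal{D}$ is simply connected, its preimage is a disjoint periodic array of disks $\widetilde{\mathcal{D}}_k = \widetilde{\mathcal{D}}_0 + (k,0)$, each of area $A$, so that the gaps between them have density $1-A < \tfrac12$ per period. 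The hypothesis $\tau_\mathcal{D}(\phi)=n$ says precisely that $\widetilde{\phi}(\widetilde{\mathcal{D}}_0) = \widetilde{\mathcal{D}}_n$, i.e.\ the flow carries one disk across $n$ full periods of this array. The crucial point, and the step I expect to be hardest, is to convert this forced long-range transport into a linear-in-$n$ energy cost: a naive displacement-energy estimate in the strip is $n$-independent and hence useless, so one must genuinely use the translation. My plan is to establish an energy--overlap sub-lemma stating that pushing the moving disk $\widetilde{\phi}_t(\widetilde{\mathcal{D}}_0)$ past each successive period of the obstacle array $\{\widetilde{\mathcal{D}}_k\}$ forces it to create and then destroy overlap of area at least $2A-1$ with the array, and that each such unit of created overlap is paid for by the positive part $\int \max_p \widetilde{H}_t\,\dd t$ (respectively, its destruction by $-\int\min_p \widetilde{H}_t$). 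Summing the $n$ crossings and retaining only one side of the oscillation produces $\int_0^1(\max \widetilde H_t - \min \widetilde H_t)\,\dd t \geq \tfrac{2A-1}{2}\,n$, which is the desired estimate; taking the infimum over generating Hamiltonians then gives $\tfrac{2A-1}{2}|n| \leq \inf_{\phi\in S_n}\|\phi\|$. The delicate part is making the per-period overlap bound rigorous while the disk deforms, for which I would either track a monotone ``progress'' area functional across the integer barriers or invoke a relative/Lagrangian spectral invariant for $L=\partial\mathcal{D}$ whose spread is controlled by $\|\phi\|$ and whose value is computed by the translation number.
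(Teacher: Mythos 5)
Your upper bound is essentially the paper's construction (transport a per-loop area of $2A-1$ efficiently instead of rigidly rotating; the paper realizes it by siphoning area $1-A$ from $\mathcal{D}$ into a disk $\mathcal{D}'$ in the complement along a spiral making $n$ loops, then shearing the annulus $S^1 \times (1-A-\delta'', A+\delta')$ by $n$ turns at cost $\approx |n|(2A-1)$, then sliding back), and with the details filled in it would go through. The genuine gap is in your lower bound. Your central ``energy--overlap sub-lemma'' is both unproven and false in the naive form you state. First, there is no area-theoretic forcing in the universal cover: the disks $\widetilde{\mathcal{D}}_k$ are compactly contained in the strip $\RR \times (0,1)$, so the complement of the periodic array is \emph{connected} and has infinite area, and nothing prevents $\widetilde{\phi}_t(\widetilde{\mathcal{D}}_0)$ from snaking through the channels above and below the obstacle disks while remaining disjoint from the array --- indeed the efficient construction in the upper bound threads a thin spiral through exactly such a channel. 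Second, even where overlap is created or destroyed, Hofer energy does not pay for it: $\|\phi\|$ controls displacement (via energy--capacity type inequalities), not the rate of change of the overlap area $Area(\widetilde{\phi}_t(\widetilde{\mathcal{D}}_0) \cap \bigcup_k \widetilde{\mathcal{D}}_k)$, which can vary by $O(A)$ under isotopies of arbitrarily small Hofer length. So the claimed inequality $\int_0^1 (\max \widetilde{H}_t - \min \widetilde{H}_t)\,\dd t \geq \frac{2A-1}{2} n$ does not follow from the stated mechanism, and the factor $\frac{2A-1}{2}$ is not derived but guessed.

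The paper's actual route is the one your final sentence only gestures at: it builds a homogeneous, Hofer-Lipschitz quasimorphism $\rho = Cal_{j_{s_2}} - Cal_{j_{s_1}}$ on $\Ham(\mathbb{A})$, the difference of pullbacks of the Entov--Polterovich Calabi quasimorphism on $S^2$ under two embeddings $j_s$ of $\mathbb{A}$ into a sphere of area $2A$ (gluing disks of areas $s$ and $2A-1-s$ to the two boundary circles, so that $j_s(L)$ bisects $S^2$). With $s_1 = 0$, $s_2 = 2A-1$ one computes $\rho(\widehat{\Phi}) = 2A(2A-1)$ via the median of the Reeb graph, and the Lipschitz constant is $4A$, which yields exactly $\frac{2A-1}{2}|n|$. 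Crucially, this requires proving that $\rho$ \emph{vanishes on} $S_0$ --- the step entirely absent from your plan --- which the paper does by reducing to diffeomorphisms fixing a neighborhood of $L$ (using $C^0$-continuity of quasimorphisms vanishing on $\Ham_c$ of a disk), splitting $\phi = \phi_\mathcal{D} \circ \phi_P$ over $\mathcal{D}$ and the pair of pants $P$, killing the Dehn-twist components of $\bar{\phi}_P$ in $\pi_0(Symp_c(P)) \simeq \ZZ^3$ by cheap twists and the translation-number hypothesis, correcting flux to land in $Ham_c(P)$, and then computing both Calabi pullbacks to be equal on each factor. If you want to salvage your plan, the ``relative/Lagrangian spectral invariant for $L$'' idea would have to be developed to this same level: a quantitative invariant that is Hofer-Lipschitz, grows linearly in $\tau_\mathcal{D}$, and is insensitive to all of $S_0$ --- which is precisely what the quasimorphism $\rho$ packages.
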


This answers question 5 from the list provided by F. Le Roux in ~\cite{LR:6-questions}.
The author provides there the following background for this question.
Let $A$ be the annulus $S^1 \times (a, b) \subset \mathbb{A}$ ($0 < a < b < 1$) and 
$D' \subset \mathbb{A}$ be a displaceable disk. Consider Hamiltonians $\phi_A, \phi_{D'}$
which fix $A$ (resp., $D'$) pointwise with translation number $n$. Then, according to 
~\cite{LR:6-questions}, the energy-capacity inequality in the universal cover 
$\widetilde{\mathbb{A}}$ of $\mathbb{A}$ implies $\|\phi_A\| \geq |n| \cdot Area (A)$.
On the other hand, $\| \phi_{D'} \|$ could be less than $1$ regardless of $n$.
The case of a non-displaceable disk $\mathcal{D}$ may be considered as an ``intermediate''
between $A$ and $D'$. F. Le Roux asked whether the translations of $\mathcal{D}$ 
behave similarly to those of one of the sets above. 
Theorem~\ref{T:bound} shows that Hofer's norm of translations of $\mathcal{D}$
grows linearly with respect to the translation number (in a similar way to the annulus $A$) 
but the coefficient is strictly less than $Area(\mathcal{D})$, in contrast to the case of an annulus.

\medskip

The rest of this paper is organized as follows.
In Section~\ref{S:qm} we extend the translation number $\tau_\mathcal{D}$ 
to a quasimorphism $\rho : \Ham(\mathbb{A}) \to \RR$. $\rho$ is Hofer-Lipschitz, hence gives 
a lower bound on Hofer's distance. This implies the left-hand side inequality in 
Theorem~\ref{T:bound}. For the right-hand side inequality we describe 
an explicit Hamiltonian flow whose time-$1$ map $\psi_n$ belongs to $S_n$ and whose 
length is bounded by $(2 A - 1) \cdot |n| + 1$. The details are carried out in 
Section~\ref{S:diff}.

\medskip

\emph{Acknowledgements:}
The author would like to thank F. Le Roux for his useful comments on this work.
Special thanks to M. Maydanskiy for a crash course on mapping class group.
The author thanks Eidgen\"{o}ssische Technische Hochschule in Z\"{u}rich for its hospitality during 
this research. This material is based upon work partially supported by the National Science
Foundation under agreement No. DMS-0635607. Any opinions, findings and conclusions or 
recommendations expressed in this material are those of the author and do not necessarily reflect the views
of the National Science Foundation.

\section{Lower bound}\label{S:qm}

Let $G$ be a group. A function $r : G \to \RR$ is called a \emph{quasimorphism} if there exists 
a constant $R$ such that $|r(fg) - r(f) - r(g)| \leq R$ for all $f, g \in G$. 
$R$ is called the \emph{defect} of $r$. The quasimorphism $r$ 
is \emph{homogeneous} if it satisfies $r(g^m) = mr(g)$ for all $g \in G$ and $m \in \ZZ$.
Any homogeneous quasimorphism satisfies $r(fg) = r(f) + r(g)$ for commuting elements $f, g$.

Pick a function $\widehat{H} : \mathbb{A} \to \RR$ with compact support such that $\widehat{H}(\theta, h) = h$
away from a small neighborhood of $\partial \mathbb{A}$. Denote by $\widehat{\Phi}$ the time-$1$ map of the 
Hamiltonian flow generated by $\widehat{H}$. It is easy to see that $\widehat{\Phi} \in S$ with translation number 
$\tau_\mathcal{D} (\widehat{\Phi}) = 1$. Note that $S = \bigcup_{n \in \ZZ} S_n = \bigcup_{n \in \ZZ} \widehat{\Phi}^n S_0$.

In what follows we construct a homogeneous quasimorphism $\rho: Ham(\mathbb{A}) \to \RR$ which is 
Hofer-Lipschitz ($|\rho (\phi)| \leq k \|\phi\|$ for all $\phi \in \Ham(\mathbb{A})$) and 
such that $\rho (S_0) = 0$, $\rho (\widehat{\Phi}) = \alpha > 0$. Any $\phi_n \in S_n$ decomposes as 
$\phi_n = \widehat{\Phi}^n \circ s$ for some $s \in S_0$. Hence 
$$|\rho (\phi_n) - \alpha n| = |\rho (\widehat{\Phi}^n \circ s) - n \rho(\widehat{\Phi}) - 0| = 
|\rho (\widehat{\Phi}^n \circ s) - \rho(\widehat{\Phi}^n) - \rho(s)| < R.$$
($R$ denotes the defect of $\rho$). It follows that 
\[
	\rho (\phi_n) = \alpha \cdot n + \delta_{\phi_n} 
\]
where $|\delta_{\phi_n}| \leq R$. Note that $\phi_n^k \in S_{nk}$, hence by homogenuity
\[
	\rho (\phi_n) = \frac{\rho (\phi_n^k)}{k} = \frac{\alpha \cdot n k + \delta_{\phi^k_n}}{k} 
\]
which implies $\rho (\phi_n) = \alpha \cdot n$ in the limit as $k \to \infty$.
This way, $\rho$ extends (up to a rescaling by $\alpha$) the translation number $\tau_\mathcal{D}$.
The Lipschitz property of $\rho$ implies the desired lower bound: 
\begin{equation} \label{eq:l_bound}
	\| \phi_n \| \geq \frac{|\rho (\phi_n)|}{k} = \ |n| \cdot \frac{\alpha}{k}.
\end{equation}

\medskip

In order to build $\rho$ we use the Calabi quasimorphism 
on $Ham (S^2)$ which was constructed by M. Entov and L. Polterovich 
in ~\cite{En-Po:calqm}. We give a brief recollection of the relevant facts.

Let $D$ be an open disk equipped with a symplectic form $\omega$. 
Let $F_t : D \to \RR$, $t \in [0, 1]$ be a time-dependent smooth function with compact support. We define
$\widetilde{\Cal} (F_t) = \int_0^1 \left( \int_D F_t \omega \right) \dd t$. As $\omega$ is exact on $D$,
$\widetilde{\Cal}$ descends to a homomorphism $\Cal_D: \Ham_c(D) \to \RR$ which is called 
the Calabi homomorphism. Clearly, for $\phi \in \Ham_c(D)$, $| \Cal_D(\phi) | \leq Area(D) \cdot \| \phi \|$.

Let $S^2$ be a sphere equipped with a symplectic form $\omega$. Let $Area (S^2) = 2 A$.
For a smooth function $F: S^2 \to \RR$ the Reeb graph $T_F$ is defined as the set of connected
components of level sets of $F$ (for a more detailed definition we refer the reader to ~\cite{En-Po:calqm}). 
For a generic Morse function $F$ this set, equipped with the topology 
induced by the projection $\pi_F: S^2 \to T_F$, is homeomorphic to a tree. 
We endow $T_F$ with a measure given by $\mu (A) = \int_{\pi_F^{-1}(A)} \omega$ 
for any $X \subseteq T_F$ with measurable $\pi_F^{-1}(X)$. $x \in T_F$ is the median of $T_F$
if the measure of each connected component of $T_F \setminus \{x\}$ does not exceed $A$.
This construction can be extended to functions $F$ such that $F \big|_{supp (F)}$ is Morse.

~\cite{En-Po:calqm} describes construction of a homogeneous quasimorphism $\Cal_{S^2} : \Ham (S^2) \to \RR$.
It has the following properties: $\Cal_{S^2}$ is Hofer-Lipschitz ($|\Cal_{S^2}(\phi)| \leq 2A \cdot \| \phi \|$).
In the case when $\phi \in \Ham(S^2)$ is supported in a disk $D$ which is displaceable 
in $S^2$, $\Cal_{S^2} (\phi) = Cal_D (\phi \big|_D)$.
Moreover, for $\phi \in \Ham(S^2)$ generated by an autonomous function $F: S^2 \to \RR$, 
$\Cal_{S^2} (\phi)$ can be computed in the following way. 
Let $x$ be the median of $T_F$ and $X = \pi_F^{-1} (x)$ be the corresponding subset of $S^2$.
Then 
\[
	\Cal_{S^2}(\phi) = \int_{S^2} F \omega - 2A \cdot F(X).
\]

Given a symplectic embedding $j : \mathbb{A} \to S^2$ into a sphere of area $2 A$, 
consider the pullback $Cal_{j} = j^* (Cal_{S^2}) : Ham(\mathbb{A}) \to \RR$. 
Namely, given $\phi \in Ham(\mathbb{A})$, extend $j_* (\phi)$ to $\tilde{\phi} \in \Ham (S^2)$ by identity 
on the complement of $j (\mathbb{A})$. Then $Cal_j (\phi) = \Cal_{S^2} (\tilde{\phi})$.
Clearly, $Cal_{j}$ is a homogeneous quasimorphism.
It has the following properties:
\begin{itemize}
	\item
		$Cal_j (\phi) = \Cal_D(\phi \big|_D)$ for any $\phi$ supported in a disk $D$ of area $A$.
		To see that note that the corresponding $\tilde{\phi} \in Ham(S^2)$ 
		is supported in a displaceable disk $j(D)$ in $S^2$.
	\item
		\[
			\left|\Cal_j(\phi)\right| = \left|\Cal_{S^2}\left(\tilde{\phi}\right)\right| \leq 
			2A \cdot \left\| \tilde{\phi} \right\|_{S^2} \leq 2A \cdot \left\| \phi \right\|_{\mathbb{A}}.
		\]
	\item for an autonomous $\phi$ generated by a compactly supported function $H : \mathbb{A} \to \RR$, 
		\[
			Cal_j (\phi) = \int_\mathbb{A} H \omega - 2A \cdot H(X)
		\]
		where $X \subseteq \mathbb{A}$ is the level set component which is sent by $j$ to the median set
		of $j_* (H)$ in $S^2$.
\end{itemize}

\medskip

Consider the embeddings $j_s : \mathbb{A} \to S^2$ ($0 \leq s \leq 2 A - 1$) into a sphere of area $2 A$ 
($A = Area (\mathcal{D})$) 
that are given by gluing a disk of area $s$ to $S^1 \times \{0\}$ and a disk of area
$(2 A - 1 - s)$ to $S^1 \times \{1\}$.
This construction ensures that $j_s (L)$ bisects $S^2$ 
into two displaceable disks.

We pick $0 \leq s_1 < s_2 \leq 2 A - 1$ and set 
\begin{equation}\label{eq:qm_def}
	\rho = \rho_{s_1, s_2} = Cal_{j_{s_2}} - Cal_{j_{s_1}}.
\end{equation}
Obviously, $\rho$ is a homogeneous quasimorphism on $Ham(\mathbb{A})$ which satisfies 
the Lipschitz property:
\[
	|\rho (\phi)| \leq |Cal_{j_{s_2}} (\phi)| + |Cal_{j_{s_1}} (\phi)| \leq 4 A \cdot \|\phi\|.
\]
Consider the function $\widehat{H}$ which was used to define the Hamiltonian $\widehat{\Phi}$ described 
above. It is easy to see that the ``median'' level set $X_s$ of $\widehat{H}$
which is relevant for the computation of $Cal_{j_s} (\widehat{\Phi})$ is $S^1 \times \{A-s\}$.
Hence 
\[
	Cal_{j_s} (\widehat{\Phi}) = \int_\mathbb{A} \widehat{H} \omega - 2A \cdot \widehat{H}(X_s) = 
		\int_\mathbb{A} \widehat{H} \omega - 2A \cdot (A-s).
\]
This implies
\[
	\rho(\widehat{\Phi}) = Cal_{j_{s_2}} (\widehat{\Phi}) - Cal_{j_{s_1}} (\widehat{\Phi})= 2A \cdot [- (A-s_2) + (A-s_1) ] = 2A \cdot (s_2 - s_1) > 0.
\]
Substitute $s_1 = 0$, $s_2 = 2 A - 1$ into the definition of $\rho$. Using the computation above,
$\alpha = \rho(\widehat{\Phi}) = 2A \cdot (s_2 - s_1) = 2A (2A-1)$. $\rho$ is $4A$-Lipschitz, 
hence for any $\phi_n \in S_n$ ~\eqref{eq:l_bound} implies the lower bound of Theorem~\ref{T:bound}:
\[
	\|\phi_n\| \geq \ |n| \cdot \frac{\alpha}{k} = |n| \cdot \frac{2A (2A-1)}{4A}= |n| \cdot \frac{2A-1}{2}.
\]

\medskip

It is left to show that $\rho$ vanishes on $S_0$. Let 
$S'_0 = \{\phi \in S_0 \, | \, supp(\phi) \subset \mathbb{A} \setminus L\}$
be the subgroup which fixes a neighborhood of $L$ pointwise.

\begin{lem}\label{lm:fix_neigh}
	Let $q$ be a homogeneous quasimorphism which is Hofer-continuous and vanishes on $S'_0$.
	Then $q$ vanishes on $S_0$.
\end{lem}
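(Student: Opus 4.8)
The plan is to reduce the statement to a purely Hofer-geometric density fact, namely that $S'_0$ is Hofer-dense in $S_0$, and then to use homogeneity to upgrade this density into the \emph{exact} vanishing of $q$. First I would record that $S'_0$ is a normal subgroup of $S_0$: if $s\in S'_0$ is the identity near $L$ and $\phi\in S_0$, then $\phi(L)=L$, so $\phi s\phi^{-1}$ is again the identity near $L$ and still preserves $\mathcal{D}$ with vanishing translation number. Exploiting normality together with homogeneity, I would show that $q$ is constant on right $S'_0$-cosets. For $\phi\in S_0$ and $s\in S'_0$, normality gives $(\phi s)^m=\phi^m s_m$ with $s_m\in S'_0$; then $q(s_m)=0$ and the defect bound yield $|q(\phi^m s_m)-q(\phi^m)|\le R$, while homogeneity gives $q((\phi s)^m)=m\,q(\phi s)$ and $q(\phi^m)=m\,q(\phi)$. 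Dividing by $m$ and letting $m\to\infty$ gives $q(\phi s)=q(\phi)$. Since $q$ is Hofer-Lipschitz, say $|q(\cdot)|\le k_0\|\cdot\|$, this yields $|q(\phi)|=|q(\phi s)|\le k_0\|\phi s\|$ for every $s\in S'_0$, hence $|q(\phi)|\le k_0\cdot\inf_{s\in S'_0}\|\phi s\|$. Thus it suffices to prove that this infimum vanishes, i.e. that every $\phi\in S_0$ is a Hofer-limit of elements of $S'_0$.

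To prove the density I would localize $\phi$ near $L$ inside an arbitrarily thin collar at arbitrarily small energy. Fix symplectic collar coordinates $(\theta,r)$ on a neighborhood $W$ of $L$, with $L=\{r=0\}$, $\omega=d\theta\wedge dr$, and $\mathcal{D}\cap W=\{r\le 0\}$. Because $\phi$ preserves $L$, its germ along $L$ is generated by a (local, possibly time-dependent) Hamiltonian $H_t$ with $H_t|_L$ constant; after subtracting a function of $t$ I may assume $H_t|_L\equiv 0$, so that $H_t=O(r)$ and $\partial_\theta H_t=O(r)$ near $L$. For small $\epsilon>0$ let $\chi(r)$ be a cut-off equal to $1$ on $\{|r|\le\epsilon/2\}$ and supported in $W_\epsilon=\{|r|<\epsilon\}$, let $c_\epsilon$ be the time-$1$ map of $\chi H_t$, and set $s_\epsilon=c_\epsilon^{-1}\phi$. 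Since $\partial_\theta(\chi H_t)=\chi\,\partial_\theta H_t=O(r)$, the transverse equation $\dot r=-\partial_\theta(\chi H_t)$ gives $|r(t)|\le e^{C}|r(0)|$ for a constant $C$ depending only on $\phi$; hence trajectories starting in $W_{\epsilon'}$ with $\epsilon'=e^{-C}\epsilon/2$ stay in $\{|r|\le\epsilon/2\}$, where $\chi\equiv1$. There $c_\epsilon$ coincides with the flow of $H_t$, i.e. with $\phi$, so $s_\epsilon$ is the identity on $W_{\epsilon'}$; as $c_\epsilon\in S_0$ (it is supported in $W_\epsilon$, preserves $L$, and has translation number $0$), this shows $s_\epsilon\in S'_0$. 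Finally, because $H_t|_L\equiv 0$ we have $|\chi H_t|\le|H_t|\le C'\epsilon$ on $W_\epsilon$, so $\|\phi s_\epsilon^{-1}\|=\|c_\epsilon\|\le\int_0^1\operatorname{osc}(\chi H_t)\,dt\le 2C'\epsilon$. Letting $\epsilon\to 0$ gives $\inf_{s\in S'_0}\|\phi s\|=0$, and combined with the first paragraph this yields $q(\phi)=0$.

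The hard part is the energy estimate $\|c_\epsilon\|\le 2C'\epsilon$, and more precisely its two ingredients: that the localized map still agrees with $\phi$ near $L$ (so that $s_\epsilon$ genuinely lands in $S'_0$) and that its oscillation is $O(\epsilon)$. Both rest on the single structural fact that an isotopy preserving $L$ is generated by a Hamiltonian constant along $L$: this forces the oscillation over a width-$\epsilon$ collar to be $O(\epsilon)$ and makes the transverse drift tangent to $L$, so that truncating the Hamiltonian outside $W_\epsilon$ does not disturb the dynamics near $L$. The point I expect to require the most care is the existence of such an $L$-preserving generating Hamiltonian for the germ of $\phi$ along $L$; this amounts to knowing that this germ is isotopic to the identity through area-preserving germs fixing $L$, which I would deduce from the connectedness of $\mathrm{Diff}_+(S^1)$ together with a Moser-type normalization of the transverse jet. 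Finally, I expect homogeneity to be indispensable precisely in the first step: mere Hofer-continuity would only give $|q(\phi s_\epsilon^{-1})-q(\phi)|\le R$, and it is the coset-invariance $q(\phi s)=q(\phi)$ — a consequence of homogeneity and the normality of $S'_0$ — that removes the defect $R$ and converts the density into exact vanishing.
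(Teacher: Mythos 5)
Your proposal is correct in substance but follows a genuinely different route from the paper's proof, and the difference is instructive. The paper never establishes that $S'_0$ is Hofer-dense in $S_0$: it only corrects $\phi$ \emph{on the curve $L$ itself} by a Hofer-small $\psi \in S_0$ (the same thin-collar trick you use, but applied only to the restriction $\phi\big|_L$), and then kills the remaining germ along $L$ by a merely $C^0$-small $h$; to conclude it invokes the Entov--Polterovich--Py continuity theorem, which upgrades the vanishing of $q$ on $\Ham_c(D)$ for a disk $D \subset \mathbb{A}\setminus L$ to $C^0$-continuity of $q$. You instead prove the stronger, purely metric statement that the whole germ of $\phi$ along $L$ can be matched at Hofer cost $O(\epsilon)$, so that $S'_0$ is Hofer-dense in $S_0$ and no $C^0$-continuity is needed. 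What your route buys is self-containedness: no appeal to the deep Entov--Polterovich--Py result. What it costs is exactly the ingredient you flag as delicate: that the germ of $\phi$ along $L$ is isotopic to the identity through area-preserving germs preserving $L$, hence generated by a local Hamiltonian constant on $L$. (The normalization itself is automatic: for any such germ isotopy, writing $\iota_{X_t}\omega = dH_t + a(t)\,d\theta$ in collar coordinates, tangency to $L$ forces $\partial_\theta H_t\big|_L = -a(t)$, and integrating over $L$ gives $a(t)=0$, so $H_t$ is constant on $L$.) This deferred lemma is true and provable along the lines you sketch --- connectedness of $\mathrm{Diff}_+(S^1)$ reduces to germs fixing $L$ pointwise, which are contracted by conjugating with the fiber scaling $\Sigma_s(\theta,r)=(\theta,sr)$, legitimate here because $\Sigma_s^*\omega = s\,\omega$ so conjugation by $\Sigma_s$ preserves area-preservation --- but as written it is the one gap you must fill, and it is precisely the analysis the paper's $C^0$ argument is designed to avoid. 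Two smaller remarks. First, your normality-plus-homogeneity coset argument is correct but redundant once density is in hand: the standard fact that a homogeneous quasimorphism Lipschitz at the identity of a bi-invariantly metrized group satisfies $|q(f)-q(g)| \leq k\,d(f,g)$ (apply the defect bound to $f^m, g^m$, use $d(f^m,g^m) \leq m\, d(f,g)$, divide by $m$ and let $m \to \infty$) already converts density into vanishing --- this is also where homogeneity genuinely enters, so your closing claim that homogeneity is indispensable is right, but normality of $S'_0$ is not actually needed. Second, your parenthetical justification that $c_\epsilon \in S_0$ deserves one more line: since $\partial_\theta(\chi H_t)\big|_L = 0$, every time-$t$ map of the cut-off flow preserves $L$ and hence $\mathcal{D}$, so the integer-valued translation number is locally constant along the flow and therefore vanishes, as you assert.
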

\begin{proof}
Pick an open disk $D \subset \mathbb{A} \setminus L$. $\Ham_c (D) \subset S'_0$, therefore 
$q$ vanishes on $\Ham_c (D)$. It follows from the results of ~\cite{En-Po-Py:qm-continuity} that 
$q$ is continuous in the $C^0$-topology.

Let $\phi \in S_0$. Applying an appropriate $\psi \in S_0$ with 
arbitrary small Hofer norm, we may ensure that $\psi \circ \phi = \Id$
on $L$. Further, we may find a $C^0$-small diffeomorphism $h \in Ham(\mathbb{A})$ such that
$h \circ \psi \circ \phi = \Id$ in a neighborhood of $L$. 
It follows that $h \circ \psi \circ \phi \in S'_0$ and $q (h \circ \psi \circ \phi) = 0$.
Hofer and $C^0$-continuity of $q$ imply that $q (\phi) = 0$.
\end{proof}

Let $\phi \in S'_0$. We show that $\rho (\phi) = 0$. $\phi$ splits to a composition
$\phi = \phi_\mathcal{D} \circ \phi_P$ where $\phi_\mathcal{D}$ is supported in 
$\mathcal{D}$ and $\phi_P$ is supported in the pair of pants
$P = \mathbb{A} \setminus \mathcal{D}$. $\phi_\mathcal{D}, \phi_P$ have disjoint supports, 
therefore they commute. Hence $\rho (\phi) = \rho(\phi_\mathcal{D}) + \rho (\phi_P)$. 
Note that $\phi_\mathcal{D} \in Ham_c (\mathcal{D})$ and 
$Area(\mathcal{D}) = A$, so $Cal_{j_s} (\phi_\mathcal{D}) = Cal_\mathcal{D} (\phi_\mathcal{D})$
for all $s$. Therefore $\rho (\phi_\mathcal{D}) = 0$.

$\phi_P$ is Hamiltonian on $\mathbb{A}$, but after the restriction to $P$
we have just $\bar{\phi}_P = \phi_P\big|_P \in Symp_c(P)$. In the argument below we apply a sequence of 
deformations to $\phi_P$ in order to get $\phi'_P$ whose restriction $\bar{\phi}'_P \in Ham_c (P)$. All deformations
involved in the process preserve the value $\rho (\phi_P)$. Finally, we show that
$\rho (\phi'_P) = 0$ by explicit computation.
 
The mapping class group 
$\pi_0 (Symp_c (P))$ is isomorphic to $\ZZ^3$ and is generated by Dehn twists near the
three boundary components. For the proof of this fact we refer the reader to 
~\cite{F-M:mapping-class} where the authors show that $\pi_0 (Diff_c (P)) \simeq \ZZ^3$
and is generated by Dehn twists. Note that $\phi, \psi \in Symp_c (P)$ are isotopic in 
$Symp_c$ if and only if they are isotopic in $Diff_c$. As Dehn twists belong to
$Symp_c (P)$, the statement for $\pi_0 (Symp_c (P))$ follows. 

Denote by $T_1, T_0$ Dehn twists near $S^1 \times \{1\}$, $S^1 \times \{0\}$ 
and by $T_L$ a Dehn twist in 
$P$ near $L = \partial \mathcal{D}$. 
Note that we may find a Hamiltonian $\psi_L$ in $S'_0$ with arbitrary small Hofer norm
whose restriction to $P$ realizes the Dehn twist $T_L$.
$\bar{\phi}_P$ is isotopic in $Symp_c (P)$
to some $T_1^{k_1} T_0^{k_0} T_L^{k_L}$ ($k_i \in \ZZ$). 
If $k_L \neq 0$ we replace the original $\phi \in S'_0$ by $\psi_L^{-k_L} \circ \phi \in S'_0$.
As $\|\psi_L\|$ can be chosen to be arbitrarily small, by continuity of $\rho$ it is enough to show 
the desired statement for the deformed $\phi$.
After the replacement $k_L$ vanishes, hence the modified $\bar{\phi}_P \sim T_1^{k_1} T_0^{k_0}$.
Note that $\bar{\phi}_P$ is induced by a Hamiltonian $\phi \in S$. The definition of
$\tau_{\mathcal{D}}$ implies that $k_1 = \tau_\mathcal{D} (\phi) = -k_0$.
The minus sign appears because the opposite orientation of the boundary components
results in the opposite directions of the corresponding Dehn twists. 
Moreover, as $\phi \in S'_0$, $k_1 = \tau_\mathcal{D} (\phi) = 0$.
Therefore the restriction $\bar{\phi}_P$ belongs to the identity component of $Symp_c (P)$. 

Pick $K : \mathbb{A} \to \RR$ supported in a small neighborhood of $\mathcal{D}$ 
such that $K = 1$ in a neighborhood of the closure $\overline{\mathcal{D}}$. 
Denote by $\chi^t$ the time-$t$ map generated by the Hamiltonian flow of $K$. 
The median set for $K$ which is used to compute $Cal_{j_s} (\chi^1)$ is $K^{-1} (1)$. 
It follows that the value
\[
	Cal_{j_s} (\chi^1) = \int_\mathbb{A} K \omega - 2A \cdot 1
\]
does not depend on $s$, hence $\rho (\chi^1) = 0$. 
By homogenuity and continuity of $\rho$, $\rho(\chi^1) = 0$ 
implies that $\rho(\chi^t) = 0$ for all $t$.

Consider the homomorphism $i_* : H^1_c (P; \RR) \to H^1_c (\mathbb{A}; \RR)$ induced by inclusion $i : P \to \mathbb{A}$. 
Both $\chi^t, \phi_P$ are Hamiltonian in $\mathbb{A}$, hence their fluxes are zero in $H^1_c(\mathbb{A}; \RR)$.
After the restriction to $P$, $flux (\chi^t\big|_P), flux (\bar{\phi}_P)$ belong to the one-dimensional subspace $\ker i_* \subset H^1_c (P; \RR)$.
$flux (\chi^t\big|_P) \neq 0$, therefore one can find an appropriate $t_\phi \in \RR$
such that the restriction $\bar{\phi}'_P = \chi^{t_\phi}\big|_P \circ \bar{\phi}_P$ 
of $\phi'_P = \chi^{t_\phi} \circ \phi_P$ has zero flux in $P$.
Hence, as it is shown in ~\cite{Ba:structure-groups}, $\bar{\phi}'_P \in Ham_c (P)$.

Pick a compactly supported function $F_t : P \times [0, 1] \to \RR$ whose flow generates $\bar{\phi}'_P$.
Denote by $U_s$ the complement of the closed disk $\overline{j_s(\mathcal{D})}$ in $S^2$, it is a displaceable disk. 
$(j_s)_* (\phi'_P) \in Ham(S^2)$ and it is supported in $U_s$, therefore
\[
	Cal_{j_s} (\phi'_P) = Cal_{U_s} ((j_s)_* (\phi'_P)) = 
				\int_0^1 \left( \int_{U_s} (j_s)_* F_t \omega \right) \dd t = \int_0^1 \left( \int_P F_t \omega \right) \dd t
\]
is independent of $s$. Hence $\rho(\phi'_P) = 0$. From $\phi'_P = \chi^{t_\phi} \circ \phi_P$,
we obtain $|\rho (\phi'_P) - \rho (\chi^{t_\phi}) - \rho (\phi_P)| = |\rho (\phi_P)| < R$.
It follows that $\rho$ is bounded on the subgroup $S'_0$. As $\rho$ is homogeneous, it vanishes there.

\begin{rem}
	The extension $\frac{\rho}{\rho(\widehat{\Phi})}$ (where $\rho$ is the quasimorphism constructed above) 
	of the translation number $\tau_\mathcal{D}$ is not unique. 
	Choice of different parameters $s_1, s_2$ in ~\eqref{eq:qm_def} gives rise to 
	different extension quasimorphisms. In particular, 
	their Lipschitz constant varies. 
\end{rem}

\section{Upper bound} \label{S:diff}

Pick $n \in \ZZ$. We construct an explicit deformation $\psi_n \in S_n$ such that 
$$\|\psi_n\| \leq (2 A - 1) \cdot |n| + 1.$$ This implies the right-hand side inequality in
Theorem~\ref{T:bound}.

Consider the Hamiltonian flow described in Figure~\ref{F:swap} on the left. 
It is supported in a small neighborhood of two disks and two paths connecting them.
The flow is generated by an autonomous function $H$ such that
$H = 1$ in the internal rectangle, zero outside and is reasonably smoothed in between.
We ask from ``reasonable smoothing'' that $H$ is approximately linear on each of the two disks. 
Clearly, the time-$t$ map of the corresponding flow $\phi_t$ transfers area $t$ from the right 
disk towards the left one and vice versa. The Hofer length $l (\phi^t) = t$. 
\begin{figure}[!htbp]
\begin{center}
\includegraphics[width=0.8\textwidth]{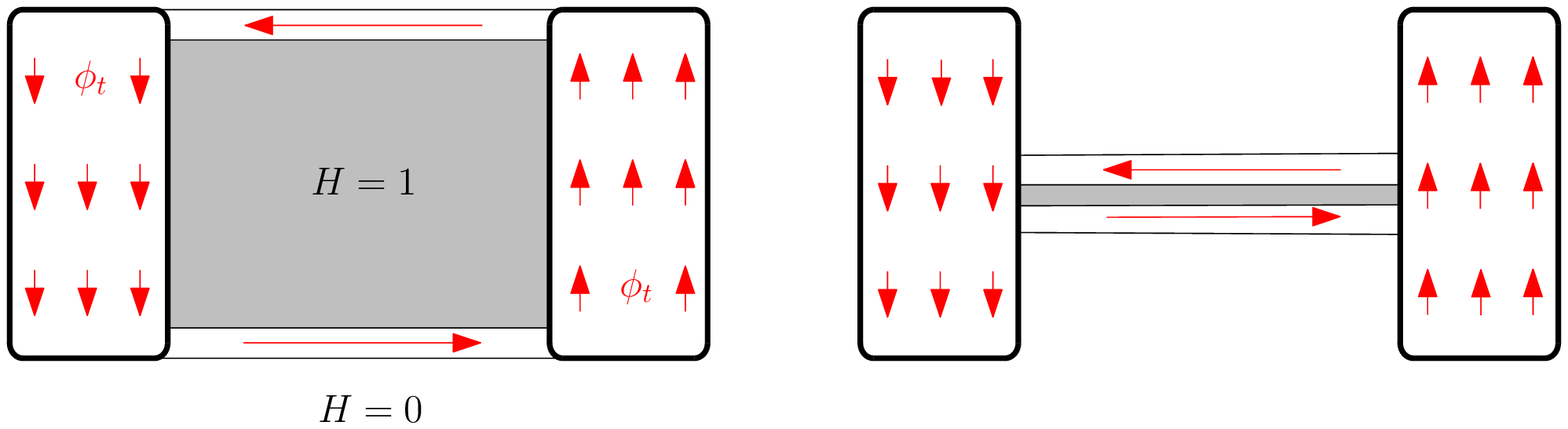}
\caption{}
\label{F:swap}
\end{center}
\end{figure}
We may deform this construction to make the internal rectangle very thin. Then it looks 
as depicted on the right, namely, the flow is supported in a small 
neighborhood of the two disks and a single path connecting them.

We apply this construction on $\mathbb{A}$ as drawn in Figure~\ref{F:spiral}, on the left. 
\begin{figure}[!htbp]
\begin{center}
\includegraphics[width=0.72\textwidth]{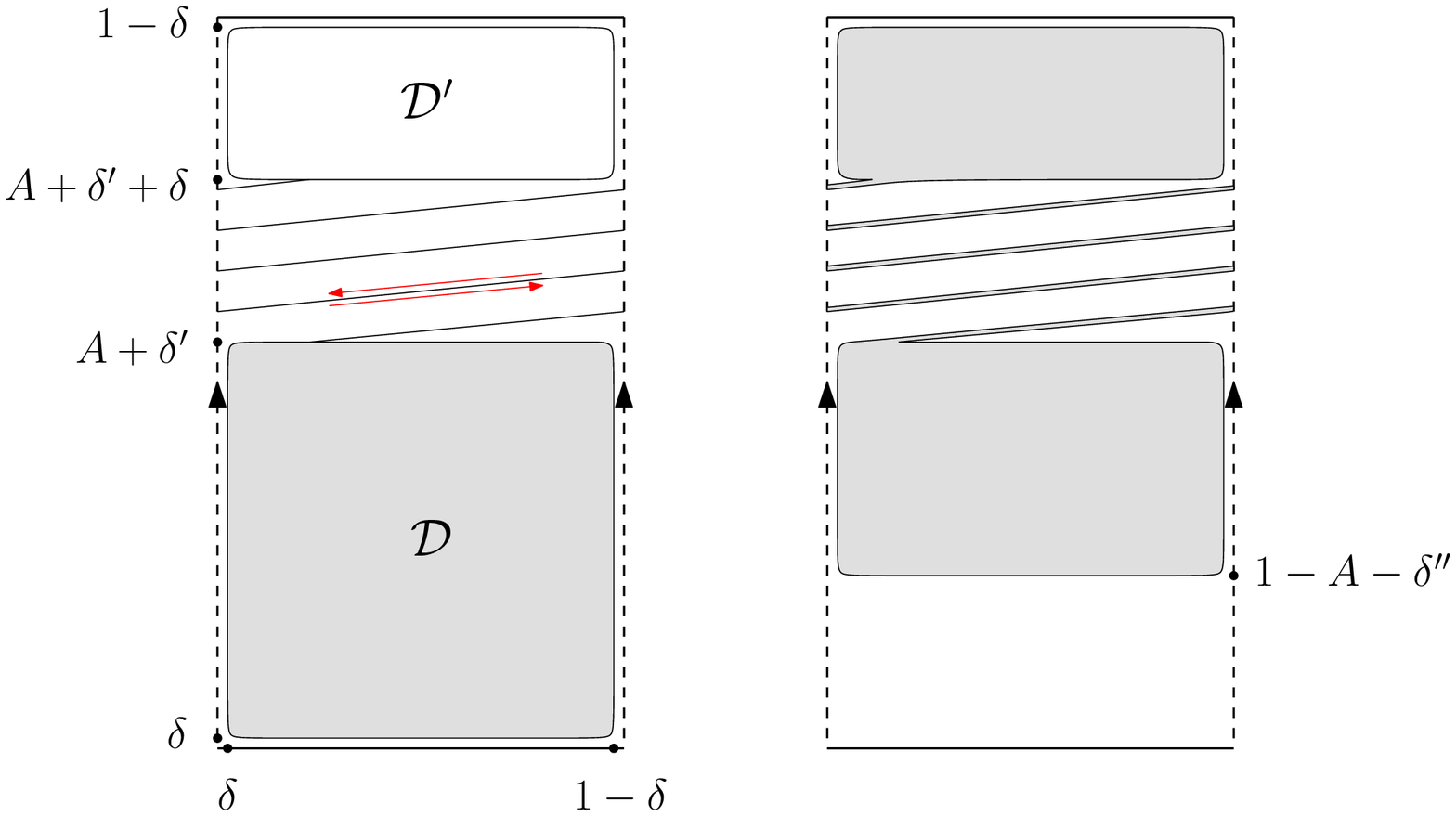}
\caption{}
\label{F:spiral}
\end{center}
\end{figure}
Namely, we change the coordinate system, if necessary, so that $\mathcal{D}$
is approximated by rectangle 
$(\delta, 1-\delta) \times (\delta, A + \delta') \subset S^1 \times (0, 1)$.
Here $A = Area(\mathcal{D})$ and $\delta, \delta' << 1$. Denote by $\mathcal{D}'$ a disk
given by smoothing the rectangle $(\delta, 1-\delta) \times (A + \delta' + \delta, 1-\delta) \subset \mathbb{A} \setminus \mathcal{D}$.
Note that $Area(\mathcal{D}') = 1 - A - \varepsilon$ where $\varepsilon$ is comparable to 
$\delta + \delta'$. Connect $\mathcal{D}$ with $\mathcal{D}'$ by a spiral which lies in 
$S^1 \times [A + \delta', A + \delta' + \delta]$ and makes
$n$ loops around the $S^1$ coordinate. Now apply the flow as in Figure~\ref{F:swap} (right) 
to transfer a portion of area from $\mathcal{D}$
to $\mathcal{D}'$ using a small tubular neighborhood of the spiral. We stop the process
when entire $\mathcal{D}'$ is covered by the deformed $\mathcal{D}$ 
(see Figure~\ref{F:spiral}, on the right). We may arrange this construction so that the deformed $\mathcal{D}$
lies above $S^1 \times \{1-A-\delta''\}$ where $\delta''$ is comparable to $\delta + \delta'$. 
The length of this deformation is 
$Area(\mathcal{D}') + \varepsilon' = 1 - A - \varepsilon + \varepsilon'$.

In the next step we rotate horizontal circles $S^1 \times \{h\}$. The rotation angle is chosen 
in a way that the annulus $S^1 \times (1-A-\delta'', A + \delta')$ 
(which contains the intersection of the deformed $\mathcal{D}$ with the original $\mathcal{D}$) is rotated $n$ times around the $S^1$ coordinate, 
and the angle gradually decreases to zero along the annulus containing the spiral ($S^1 \times (A + \delta', A + \delta' + \delta)$).
As the result, the spiral is unfolded to a vertical line. 
The remaining part of $\mathbb{A}$ is remains fixed. 
Denote by $H$ an autonomous Hamiltonian which generates this flow, its graph is shown schematically in
Figure ~\ref{F:finish} (left). $H$ is cut off in the $\delta$-neighborhood
of $\partial \mathbb{A}$ to make it compactly-supported and is smoothed near the singular points.
The cutoff will result in a strong flow near $S^1 \times \{0, 1\}$, but this has no effect 
on our construction as this flow stays away from our area of interest.
Energy needed for this deformation is 
\[
	|n| \cdot Area (S^1 \times (1-A-\delta'', A + \delta')) + \varepsilon'' = |n| \cdot (2A-1+\delta'+ \delta'') + \varepsilon''
\]
where $\varepsilon''$ covers energy consumed by rotation of the spiral area and smoothing costs. $\varepsilon''$ is bounded by $|n| \cdot \delta$.

Finally we are in the situation shown in Figure~\ref{F:finish} (right) where all points of the 
deformed $\mathcal{D}$ are already rotated $n$ times around the $S^1$ coordinate.
\begin{figure}[!htbp]
\begin{center}
\includegraphics[width=0.55\textwidth]{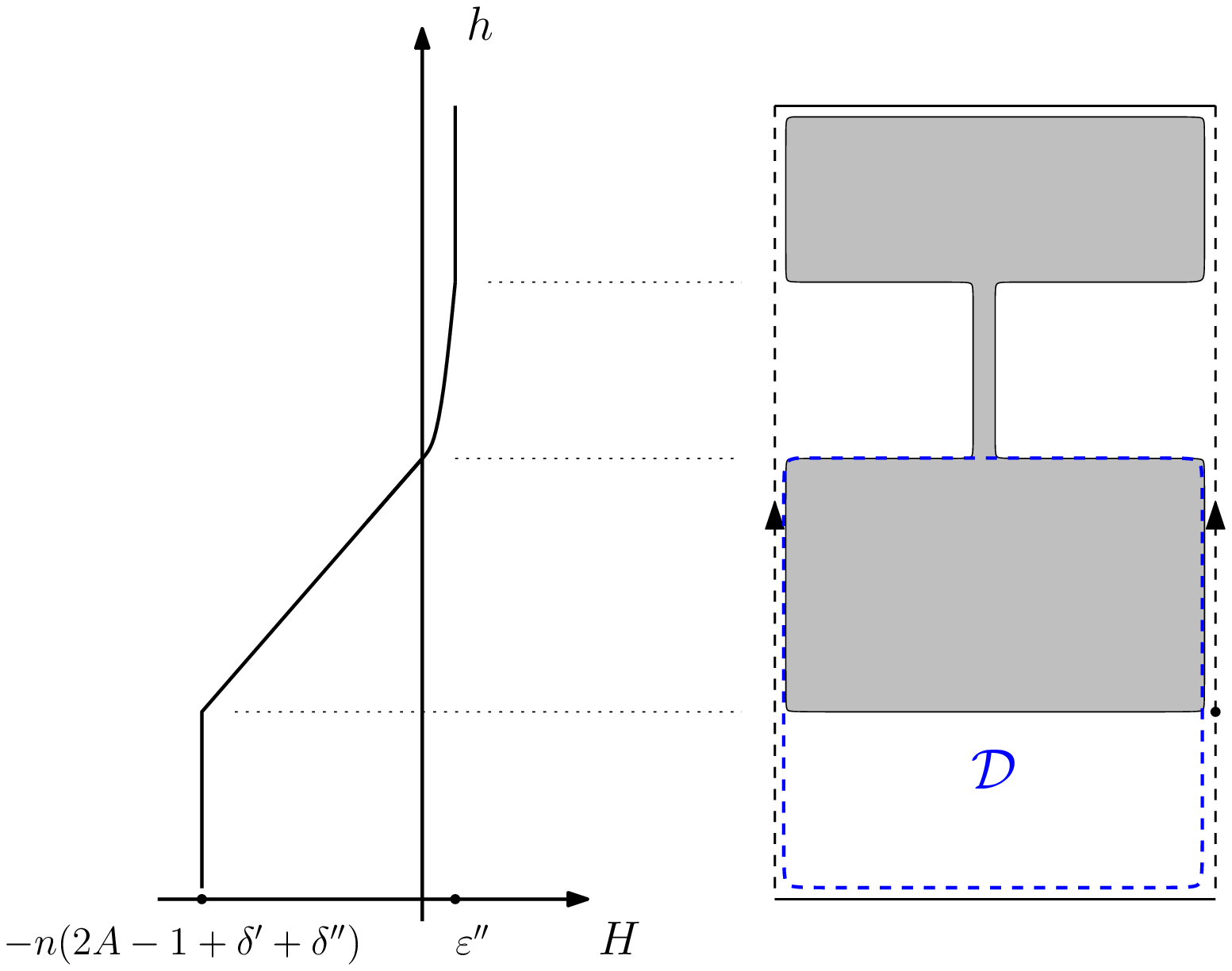}
\caption{}
\label{F:finish}
\end{center}
\end{figure}
We move the deformed disk in an obvious way down in order to fill $\mathcal{D}$. The length of such deformation
is bounded by $Area(\mathbb{A}) - Area (\mathcal{D}) = 1-A$.

Summarizing the argument above, 
we have constructed a Hamiltonian isotopy connecting $\Id$ to $S_n$ whose 
Hofer length is bounded by
\[
	(1 - A - \varepsilon + \varepsilon') + |n| \cdot (2A - 1+ \delta'+ \delta'') + \varepsilon'' + (1-A) = 
	|n| \cdot (2A - 1) + (2-2A) + \hat{\varepsilon} < |n| \cdot (2A - 1) + 1
\] 
for appropriate choices of parameters $\delta$ and $\varepsilon$.

\bibliography{bibliography}

\end{document}